\documentclass{amsart}

\usepackage{lmodern}			
\usepackage[T1]{fontenc}
\usepackage[letterpaper,margin=0.8in]{geometry} 

\usepackage{amssymb}
\usepackage{xfrac} 
\usepackage{tikz-cd}

\usepackage[disable]{todonotes}

\usepackage[maxbibnames=99,style=alphabetic]{biblatex}
\usepackage{csquotes}
\DeclareFieldFormat{postnote}{#1} 

\usepackage{xcolor}
\usepackage{hyperref}
\usepackage{bookmark}
\definecolor{darkred}{RGB}{128,0,0}
\definecolor{darkgreen}{RGB}{0,128,0}
\definecolor{darkblue}{RGB}{0,0,128}
\hypersetup{linktocpage,
	pdfborder = {0 0 0},
	colorlinks,
	citecolor=darkgreen,
	filecolor=darkred,
	linkcolor=darkblue,
	urlcolor=cyan!50!black!90
}

\newtheorem{theorem}{Theorem}[subsection]
\newtheorem{proposition}[theorem]{Proposition}
\newtheorem{lemma}[theorem]{Lemma}

\theoremstyle{definition}
\newtheorem{definition}[theorem]{Definition}

\newtheorem{convention}[theorem]{Convention}

\newtheorem{remark}[theorem]{Remark}

\newtheorem{resources}[theorem]{Resources}

\newtheorem{acknowledgements}[theorem]{Acknowledgements}
\newtheorem{outline}[theorem]{Outline}

\usepackage{titlesec} 
\makeatletter
\renewcommand{\@secnumfont}{\bfseries} 
\renewcommand{\section}{\@startsection{section}{1}
  \z@{1.1\linespacing\@plus\linespacing} 
  {.8\linespacing} 
  {\normalfont\LARGE\bfseries}} 
\makeatother

\title{loyalty of the affine line in motivic homotopy theory}
\author{Viktor Burghardt}
\address{Simons Laufer Mathematical Sciences Institute (SLMath), 17 Gauss Way Berkeley, CA 94720-5070, USA}
\email{viktorb@u.northwestern.edu}
\urladdr{\url{https://umich.edu/~viktorb}}
\date{September 28, 2026}

\begin{document}
\maketitle

	We show that integers act invertibly on a derived scheme $S$ if and only if they act invertibly on the reduced motivic suspension spectrum of the affine line in $\mathrm{MS}_S$. As a consequence, over regular locally noetherian schemes and derived schemes on which some nonzero integer acts as zero, the motivic sphere spectrum is $\mathbf{A}^1$-invariant after inverting primes that are not invertible in the base scheme.

	\begin{outline}
		We start by briefly recalling some of the necessary notions and establishing the setting we are in. We then show that $\mathbf{A}^1$ is essentially blind to differences between the base $S$ and finite \'etale covers of $S$. This will be our main tool and it will carry us all the way to the finish line via repeated use: the affine line in motivic homotopy theory respects invertibility of integers in the base. As an immediate application, away from noninvertible integers in the base, we establish $\mathbf{A}^1$-invariance of the motivic sphere spectrum over locally noetherian regular schemes, as well as over derived schemes on which some nonzero integer acts as zero.
	\end{outline}

	\begin{resources}
		Our main sources will be \cite{AHI23}, \cite{AHI24} and \cite{Hoy24}. We will not attempt to be fully self-contained and refer the reader to these references for more background.
	\end{resources}

	\begin{acknowledgements}
		This material is partially based upon work supported by the National Science Foundation under Grant No. DMS-2424139, while the author was in residence at the Simons Laufer Mathematical Sciences Institute in Berkeley, California, during the Fall 2026 semester.
	\end{acknowledgements}

\section{set and setting}


	\begin{convention}
		Unless otherwise specified, the letter $S$ will stand for a derived scheme. We denote the $\infty$-category of nisnevich local motivic spectra over $S$ by $\mathrm{MS}_S$. We use the notation $\mathbf{A}^1=\Sigma^\infty (\mathbf{A}^1,0)\in \mathrm{MS}_S$. Given an object $E$ in a stable $\infty$-category, we will denote its rationalization by $E_\mathbf{Q}$. 
	\end{convention}

	\subsection{\texorpdfstring{$\mathbf{A}^1$}{A1}-invariance}
	We briefly recall the notion of $\mathbf{A}^1$-invariance in motivic homotopy theory and demonstrate how it relates to torsion and rational parts. 
	\begin{definition}\label{definition:a1-invariance}
		We say that $E\in \mathrm{MS}_S$ is \emph{$\mathbf{A}^1$-invariant} if the internal mapping object $\underline{\mathrm{Hom}}(\mathbf{A}^1,E)$ vanishes. We will denote the full subcategory of $\mathbf{A}^1$-invariant motivic spectra of $\mathrm{MS}_S$ by $\mathrm{Sp}(S)$.
	\end{definition}

	One way to break this condition down into smaller pieces is as follows.
	\begin{proposition}\label{proposition:a1-invariance_is_a1-invariance_rationally_and_mod_primes}
		A motitiv spectrum $E\in \mathrm{MS}_S$ is $\mathbf{A}^1$-invariant if and only if $E_\mathbf{Q}$ and $E/p$ are $\mathbf{A}^1$-invariant for all primes $p\in \mathbf{Z}$.
	\end{proposition}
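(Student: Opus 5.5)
The functor $F=\underline{\mathrm{Hom}}(\mathbf{A}^1,-)\colon \mathrm{MS}_S\to\mathrm{MS}_S$ is exact, and I will use this to reduce the statement to a standard fracture argument. Since $F$ preserves cofiber sequences, it commutes with the cofiber $E/p$ of multiplication by $p$: we have $F(E/p)\simeq F(E)/p$. The rationalization $E_\mathbf{Q}=\operatorname{colim}(E\xrightarrow{2}E\xrightarrow{3}E\to\cdots)$ is a sequential colimit. Because $\mathbf{A}^1=\Sigma^\infty(\mathbf{A}^1,0)$ is compact in $\mathrm{MS}_S$, $F$ commutes with filtered colimits, so $F(E_\mathbf{Q})\simeq F(E)_\mathbf{Q}$. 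Concretely, compactness means that for each compact generator $X$ of $\mathrm{MS}_S$, the tensor product $X\otimes\mathbf{A}^1$ is again compact. Then $\mathrm{Map}(X,F(-))=\mathrm{Map}(X\otimes\mathbf{A}^1,-)$ commutes with filtered colimits, and this is enough because such $X$ detect equivalences.

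The statement therefore reduces to a claim about a single spectrum object $M=F(E)$ in a stable $\infty$-category. The claim is that $M\simeq 0$ if and only if $M_\mathbf{Q}\simeq 0$ and $M/p\simeq 0$ for all primes $p$.

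The forward direction is immediate, since rationalization and $(-)/p$ of zero are zero.

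For the converse, suppose $M/p\simeq 0$. Then multiplication by $p$ is an equivalence on $M$, so $M$ is uniformly $p$-divisible. If this holds for every prime $p$, then every map in the colimit diagram defining $M_\mathbf{Q}$ is an equivalence. Hence $M\to M_\mathbf{Q}$ is an equivalence, and $M\simeq M_\mathbf{Q}\simeq 0$.

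An alternative presentation uses the cofiber sequence $M\to M_\mathbf{Q}\to M\otimes\mathbf{Q}/\mathbf{Z}$. Here $M\otimes\mathbf{Q}/\mathbf{Z}\simeq\bigoplus_p \operatorname{colim}_n M/p^n$, and each $M/p^n$ vanishes by induction on $n$, using the cofiber sequences $M/p\to M/p^{n}\to M/p^{n-1}$. I would present the first, direct argument instead, since it avoids this induction.

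The main obstacle is the step asserting that $F$ commutes with rationalization, that is, the compactness of $\mathbf{A}^1$ and of its tensor products with compact generators. This holds because $\mathrm{MS}_S$ is compactly generated by $\Sigma^{\infty}$ of smooth schemes, desuspended by $\mathbf{P}^1$-loops, and these generators are closed under tensor products. I would cite \cite{AHI24} for this. Everything else is formal stable-category reasoning.
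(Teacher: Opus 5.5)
Your proof is correct and rests on the same input as the paper's. The paper cites \cite[6.1]{AHI24} for $\mathrm{Sp}(S)$ being stable and closed under colimits in $\mathrm{MS}_S$, which is equivalent to your statement that $\underline{\mathrm{Hom}}(\mathbf{A}^1,-)$ is exact and preserves filtered colimits. The paper then argues inside $\mathrm{Sp}(S)$ via the cofiber sequence $\mathrm{colim}_n\Sigma^{-1}E/n!\to E\to E_\mathbf{Q}$, which is your ``alternative presentation''; passing to the single object $M=\underline{\mathrm{Hom}}(\mathbf{A}^1,E)$ lets you shortcut this, since $M/p\simeq 0$ for all $p$ already forces $M\simeq M_\mathbf{Q}$.
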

	\begin{proof}
		The full subcategory $\mathrm{Sp}(S)\subset \mathrm{MS}_S$ is stable and closed under colimits by \cite[6.1]{AHI24}. If $E\in \mathrm{Sp}(S)$, then $E/p$ and $E_\mathbf{Q}$ belong to $\mathrm{Sp}(S)$ since they are colimits in $\mathrm{Sp}(S)$. On the other hand, if $E/p\in \mathrm{Sp}(S)$ for all primes $p$, then $E/n\in \mathrm{Sp}(S)$ for all integers $n$ via cofiber sequences $E/n\rightarrow E/nm \rightarrow E/m$. Moreover, if $E_\mathbf{Q}\in \mathrm{Sp}(S)$, then $E$ belongs to $\mathrm{Sp}(S)$ via the cofiber sequence $\mathrm{colim}_n \Sigma^{-1}E/n!\rightarrow E\rightarrow E_\mathbf{Q}$. 
	\end{proof}

	The following is an immediate consequence of the long exact sequence of homotopy groups.
	\begin{lemma}\label{lemma:l-squared_vanishes_on_E_mod_l_}
		Let $\mathcal{C}$ be a stable $\infty$-category, $E\in \mathcal{C}$ and $l\in \mathbf{Z}$. Then $\pi_0\mathrm{End}_\mathcal{C}(E/l)$ is a $\mathbf{Z}/l^2$-module.
	\end{lemma}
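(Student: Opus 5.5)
The plan is to work with the cofiber sequence $E \xrightarrow{l} E \to E/l$ and its long exact sequence of homotopy groups. I will show that multiplication by $l^2$ on $\mathrm{End}(E/l)$ is nullhomotopic, i.e.\ that $l^2\cdot \mathrm{id}_{E/l}=0$ in $\pi_0\mathrm{End}_\mathcal{C}(E/l)$. Since $\pi_0\mathrm{End}_\mathcal{C}(E/l)$ is a ring, and its abelian group structure is compatible with composition, this gives $l^2 x=(l^2\cdot\mathrm{id})\circ x=0$ for every element $x$. So the ring is a $\mathbf{Z}/l^2$-module.

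Write $i\colon E\to E/l$ and $\partial\colon E/l\to \Sigma E$ for the maps in the cofiber sequence, and let $f\in[E/l,E/l]$ be arbitrary.

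First, the composite $f\circ i$ lies in $[E,E/l]$. Precomposing with the map $l\colon E\to E$ gives $l\cdot(f\circ i)=f\circ(i\circ l)=0$, because $i\circ l=0$. More to the point, consider $l\cdot f$ in $[E/l,E/l]$. We have $(l\cdot f)\circ i=f\circ i\circ l=0$. By exactness of
\[
[\Sigma E,E/l]\xrightarrow{\partial^*}[E/l,E/l]\xrightarrow{i^*}[E,E/l],
\]
this means $l\cdot f=g\circ\partial$ for some $g\colon \Sigma E\to E/l$.

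Then $l^2\cdot f=l\cdot(g\circ\partial)=g\circ(l\cdot\partial)$. Here $l\cdot\partial=(\Sigma l)\circ\partial$ is a composite of two consecutive maps in the rotated cofiber sequence $E/l\xrightarrow{\partial}\Sigma E\xrightarrow{\Sigma l}\Sigma E$, so it vanishes. Hence $l^2 f=0$.

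I do not expect a real obstacle. The only point requiring care is that multiplication by $l$ on a hom group can be realized either by pre- or post-composition with $l\cdot\mathrm{id}$. This holds because composition is bilinear in a stable $\infty$-category, and it is what lets us move $l$ onto $i$ and then onto $\partial$.
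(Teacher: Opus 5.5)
Your proof is correct and is essentially the argument the paper intends when it calls the lemma an immediate consequence of the long exact sequence: exactness at $[E/l,E/l]$ exhibits this group as an extension of an $l$-torsion subgroup of $[E,E/l]$ by the quotient $[\Sigma E,E/l]/l$, which is killed by $l$, and you have unpacked this elementwise. Under the usual rotation convention the map $\Sigma E\to\Sigma E$ is $-\Sigma l$ rather than $\Sigma l$, but this sign does not affect the vanishing of $(\Sigma l)\circ\partial$.
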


	\begin{lemma}\label{lemma:l-invertibility_is_invariance_mod_l}
		Let $\mathcal{C}$ be a stable $\infty$-category, let $A\in \mathcal{C}$ and $l\in \mathbf{Z}$. Then $l$ acts invertibly on $A$ if and only if the mapping spectrum $\mathrm{map}_\mathcal{C}(A,E/l)$ vanishes for all $E\in \mathcal{C}$.
	\end{lemma}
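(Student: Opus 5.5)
Both directions come from the cofiber sequence $E\xrightarrow{l}E\rightarrow E/l$ and the exactness of $\mathrm{map}_\mathcal{C}(A,-)$.

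For the forward direction, suppose $l$ acts invertibly on $A$. Then for every $E\in\mathcal{C}$, precomposition with $l\colon A\to A$ induces an equivalence on $\mathrm{map}_\mathcal{C}(A,E/l)$. This self-map of the mapping spectrum is multiplication by $l$, which agrees with postcomposition by $l$ on $E/l$, since $l$ is natural and the mapping spectrum is $\mathbf{Z}$-linear in both variables. Consequently $l$, and hence $l^2$, acts invertibly on every homotopy group $\pi_n\mathrm{map}_\mathcal{C}(A,E/l)=\pi_0\mathrm{map}_\mathcal{C}(\Sigma^nA,E/l)$. Each such group is a module over $\pi_0\mathrm{End}_\mathcal{C}(E/l)$, acting by postcomposition, and this ring is a $\mathbf{Z}/l^2$-module by Lemma~\ref{lemma:l-squared_vanishes_on_E_mod_l_}. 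So $l^2$ acts as zero on these groups. An abelian group on which $l^2$ acts both invertibly and as zero is trivial, so all homotopy groups vanish and $\mathrm{map}_\mathcal{C}(A,E/l)\simeq 0$.

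For the converse, I would take $E=A$. The cofiber sequence $A\xrightarrow{l}A\rightarrow A/l$ gives a fiber sequence of mapping spectra
\[
\mathrm{map}_\mathcal{C}(A,A)\xrightarrow{l}\mathrm{map}_\mathcal{C}(A,A)\rightarrow \mathrm{map}_\mathcal{C}(A,A/l),
\]
and the hypothesis kills the third term. Hence postcomposition with $l$ is an equivalence on $\mathrm{map}_\mathcal{C}(A,A)$. Choose $g$ with $l\circ g\simeq \mathrm{id}_A$. Since $l=l\cdot\mathrm{id}_A$ is central, $g\circ l\simeq l\cdot g\simeq l\circ g\simeq\mathrm{id}_A$. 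So $l$ is invertible on $A$.

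The only point needing care is the naturality claim used in the forward direction: that precomposition and postcomposition by $l$ both induce multiplication by $l$ on the mapping spectrum. This follows from the additive, indeed $\mathbf{Z}$-linear, enrichment of a stable $\infty$-category. Everything else is formal.
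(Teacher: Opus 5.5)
Your proof is correct and follows the paper's argument. In the forward direction, $l$ acts invertibly on $\mathrm{map}_\mathcal{C}(A,E/l)$ through $A$, while $l^2$ acts as zero through $E/l$ by Lemma~\ref{lemma:l-squared_vanishes_on_E_mod_l_}; the converse takes $E=A$, and you fill in what the paper leaves implicit: the fiber sequence showing that $l_*$ is an equivalence on $\mathrm{map}_\mathcal{C}(A,A)$, and centrality of $l$ turning a right inverse into a two-sided inverse.
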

	\begin{proof}
		The integer $l$ acts invertibly on $\mathrm{map}_\mathcal{C}(A,E/l)$ via $A$. At the same time, $l^2$ acts as zero on it via $E/l$ by Lemma \ref{lemma:l-squared_vanishes_on_E_mod_l_}. Thus, we have $\mathrm{map}_{\mathcal{C}}(A,E/l)\simeq 0$. This proves one direction. The other direction follows by taking $E=A$.
	\end{proof}

	\begin{remark}\label{remark:l_invertible_on_a1_implies_mod_l_is_a1-invariant}
		We specialize Lemma \ref{lemma:l-invertibility_is_invariance_mod_l} to $\mathcal{C}=\mathrm{MS}_S$: $l$ acts invertibly on $\mathbf{A}^1$ in $\mathrm{MS}_S$ if and only if, for all motivic spectra $E\in \mathrm{MS}_S$, $E/l$ is $\mathbf{A}^1$-invariant.
	\end{remark}

	The following says that $\mathbf{A}^1$-invariance in $\mathrm{MS}_S$ is zariski local.
	\begin{lemma}\label{lemma:a1-invariance_is_zariski_local}
		Let $E\in \mathrm{MS}_S$. Then $\mathbf{A}^1$-invariance of $E$ can be checked on a zariski cover of $S$.
	\end{lemma}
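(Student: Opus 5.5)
We use the fact that $\mathrm{MS}_{(-)}$ is a Zariski (indeed Nisnevich) sheaf of $\infty$-categories on derived schemes, together with compatibility of the internal Hom with restriction along open immersions. Let $\{j_i\colon U_i\hookrightarrow S\}$ be a Zariski cover. The claim is that $\underline{\mathrm{Hom}}(\mathbf{A}^1,E)\simeq 0$ in $\mathrm{MS}_S$ if and only if $\underline{\mathrm{Hom}}(\mathbf{A}^1,j_i^*E)\simeq 0$ in $\mathrm{MS}_{U_i}$ for all $i$. Here $\mathbf{A}^1$ on $U_i$ denotes $\Sigma^\infty(\mathbf{A}^1_{U_i},0)$.

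The first step is to check that $j_i^*\mathbf{A}^1_S\simeq \mathbf{A}^1_{U_i}$. This holds because $j_i^*$ is symmetric monoidal and sends $\Sigma^\infty_+ X$ to $\Sigma^\infty_+(X\times_S U_i)$. It therefore preserves the cofiber of $\Sigma^\infty_+ S\xrightarrow{0}\Sigma^\infty_+\mathbf{A}^1_S$.

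The second step is the projection-formula-type identification
\[ j_i^*\underline{\mathrm{Hom}}(A,E)\simeq \underline{\mathrm{Hom}}(j_i^*A,j_i^*E). \]
For an open immersion (more generally, a smooth morphism) $j$, the pullback $j^*$ has a left adjoint $j_\sharp$ satisfying the projection formula $j_\sharp(M\otimes j^*A)\simeq j_\sharp M\otimes A$. Taking mapping spaces out of $M$ and using the adjunctions gives the displayed equivalence. Combining the two steps, $j_i^*\underline{\mathrm{Hom}}(\mathbf{A}^1,E)\simeq \underline{\mathrm{Hom}}(\mathbf{A}^1,j_i^*E)$.

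The last step is conservativity. The family $\{j_i^*\}$ is jointly conservative on $\mathrm{MS}_S$, so an object $F$ vanishes if and only if all the $j_i^*F$ vanish. To see this, descent gives $\mathrm{MS}_S\simeq\lim \mathrm{MS}_{U_\bullet}$ over the Čech nerve of the cover. Alternatively, $\mathrm{MS}_S$ is generated under colimits by $\Sigma^{p,q}\Sigma^\infty_+X$ for $X$ smooth over $S$. Each such $X$ is covered by the $X\times_S U_i$, and Nisnevich (hence Zariski) descent shows that maps out of $\Sigma^\infty_+X$ are determined by maps out of $j_{i\sharp}$ of the restrictions and their intersections. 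Applying conservativity to $F=\underline{\mathrm{Hom}}(\mathbf{A}^1,E)$ gives the lemma.

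The only nontrivial input is the smooth projection formula and Zariski descent for $\mathrm{MS}$ over derived schemes. I would cite these from \cite{AHI24} or \cite{Hoy24} rather than reprove them. I expect this to be the main point to verify, namely that these hold in the non-$\mathbf{A}^1$-invariant setting. The smooth base change and projection formula for $j_\sharp$ are formal from the construction of $\mathrm{MS}$ via Nisnevich sheaves on smooth schemes, so this should go through without difficulty.
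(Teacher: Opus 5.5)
Your proposal is correct and follows the same route as the paper: you use the projection formula to get $j^*\underline{\mathrm{Hom}}(\mathbf{A}^1_S,E)\simeq\underline{\mathrm{Hom}}(\mathbf{A}^1_U,j^*E)$ for open $j$, then invoke Zariski descent of $\mathrm{MS}$ for joint conservativity. You also spell out details the paper leaves implicit, namely the identification $j^*\mathbf{A}^1_S\simeq\mathbf{A}^1_U$ and the derivation of internal-Hom compatibility from the $j_\sharp$ projection formula.
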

	\begin{proof}
		Let $j: U\hookrightarrow S$ be open. By the projection formula we have $j^*(\underline{\mathrm{Hom}}(\mathbf{A}^1_S,E))\simeq\underline{\mathrm{Hom}}(\mathbf{A}^1_U,j^*(E))$. The claim now follows from zariski  of $\mathrm{MS}$.
	\end{proof}

\section{taming the affine line}
	\subsection{transport of vanishing}
	In this section we show that the affine line cannot tell the difference between $S$ and finite \'etale covers of $S$ that embed into $\mathbf{G}_m$. This is our main tool. It will allow us to investigate how functions act on $\mathbf{A}^1$ in $\mathrm{MS}_S$.

	\begin{theorem}\label{theorem:unit_of_a1_factors_through_induced}
		Let $S$ be a qcqs derived scheme. Let $Z\subset \mathbf{G}_m$ be closed finite \'etale with surjective structure map $g:Z \subset \mathbf{G}_m\rightarrow S$. Then the unit $\mathbb{S}\rightarrow \mathbf{A}^1$ factors through the full subcategory $g_\#(\mathrm{MS}_Z^\omega)$.
	\end{theorem}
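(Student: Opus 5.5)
The plan is to work with the monic polynomial cutting out $Z$ and with Zariski excision in $\mathrm{MS}_S$, so that the point $1\in\mathbf{A}^1(S)$ is split into a piece that is manifestly induced from $Z$ and a piece that can be killed. I read the unit $\mathbb{S}\rightarrow\mathbf{A}^1$ as the map $\Sigma^\infty_+S\rightarrow\Sigma^\infty_+\mathbf{A}^1\rightarrow\mathbf{A}^1$ classified by the section $1$. I have not pinned down the final identification, so this is a route rather than a checked argument.

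\textbf{Step 1 (presenting $Z$).} Zariski locally on $S$, the finite \'etale closed subscheme $Z\subset\mathbf{G}_m\subset\mathbf{A}^1$ is cut out by a monic polynomial $f$ of some degree $d$ whose constant term is a unit. Because $Z$ is closed in $\mathbf{A}^1$ and misses $0$, the opens $\mathbf{A}^1\setminus Z$ and $\mathbf{A}^1\setminus 0$ cover $\mathbf{A}^1$. I would first reduce to this local situation via an analogue of Lemma~\ref{lemma:a1-invariance_is_zariski_local}. I am unsure this reduction is legitimate for a factorization statement, since factorizations need not glue, so I would try to make the later constructions canonical in $f$ so that they glue.

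\textbf{Step 2 (excision).} Zariski excision in $\mathrm{MS}_S$ together with purity for the closed immersion $Z\hookrightarrow\mathbf{A}^1$ gives a cofiber sequence
\[
\Sigma^\infty(\mathbf{A}^1\setminus Z,0)\rightarrow\mathbf{A}^1\rightarrow g_\#\,\mathrm{Th}_Z(N_{Z/\mathbf{A}^1}).
\]
Purity is available in the non-$\mathbf{A}^1$-invariant setting by \cite{AHI24}, and $N_{Z/\mathbf{A}^1}$ is trivialized by $f'$ since $Z$ is \'etale. The third term lies in $g_\#(\mathrm{MS}_Z^\omega)$. If the unit factors through $g_\#(\mathrm{MS}_Z^\omega)$ after composing with the map to $g_\#\mathrm{Th}_Z$, then the obstruction to factoring the unit itself lives in $\Sigma^\infty(\mathbf{A}^1\setminus Z,0)$.

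\textbf{Step 3 (killing the obstruction).} To deal with this obstruction I would use the finite flat map $f\colon\mathbf{A}^1\rightarrow\mathbf{A}^1$, for which $f^{-1}(0)=Z$ and $f(0)$ is a unit. A finite \'etale transfer or trace for $g$ in $\mathrm{MS}$ gives a map $\mathbb{S}\rightarrow g_\#\mathbb{S}_Z$, with ambidexterity for finite \'etale maps as in \cite{AHI24} and \cite{Hoy24}. The aim is to show that the unit equals the composite of this transfer with the map $g_\#\mathbb{S}_Z\rightarrow\mathbf{A}^1$ induced by the inclusion $Z\subset\mathbf{A}^1$, up to terms already in $g_\#(\mathrm{MS}_Z^\omega)$. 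Roughly, the points of $Z$ summed with multiplicity should be expressed as ``$\deg f$ times the point $1$'' through the norm along $f$. Here surjectivity of $g$ makes the transfer nondegenerate.

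\textbf{Main obstacle.} The difficulty is carrying out Step 3 without $\mathbf{A}^1$-homotopy invariance, since $\mathbf{A}^1$ is not contractible in $\mathrm{MS}_S$. Moving the point $1$ to the divisor $Z$ can therefore not use a linear homotopy. My first attempt would be to write down an explicit elementary Nisnevich square built from $f$, namely $\mathbf{A}^1\setminus Z\rightarrow\mathbf{A}^1\setminus f^{-1}(f(1))$. In that square the relevant comparison is an excision equivalence rather than a homotopy. The fallback is to invoke the transfer formalism directly and check the degree identity on the level of $\pi_0$ of endomorphisms of $\mathbf{A}^1$.
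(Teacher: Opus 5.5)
There is a genuine gap, and it is the step you flag yourself: Step~3 carries all the content, and no mechanism is given for it. The framing of Step~2 also points the wrong way. Take the cofiber sequence $\Sigma^\infty(\mathbf{A}^1\setminus Z,0)\to\mathbf{A}^1\to C$ with $C$ induced from $Z$. It says nothing about lifting the unit, because every map into $C$ trivially factors through $C$. The remark that the ``obstruction lives in'' $\Sigma^\infty(\mathbf{A}^1\setminus Z,0)$ does not turn into a factorization. What you need is an induced object mapping \emph{into} something that maps to $\mathbf{A}^1$. Concretely: a fiber sequence $g_\#(W)\to K\to X$, a map $\mathbb{S}\to K$ that becomes null in $X$, and a map $K\to\mathbf{A}^1$ carrying it to the unit.

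Inside $\mathbf{A}^1$ there is no evident source of such nullhomotopies. For a unit $a$, the map $a-0\colon\mathbb{S}\to\mathbf{A}^1$ is the unit followed by the automorphism $t\mapsto at$, so it is nonzero whenever $\mathbf{A}^1\not\simeq 0$. The transfer idea does not supply one either. An identity saying that the points of $Z$, counted with multiplicity, equal ``$\deg f$ times the point $1$'' would at best factor a degree-$d$ multiple of the unit through $g_\#(\mathrm{MS}^\omega_Z)$, with no way to divide by $d$. Surjectivity of $g$ does not repair this.

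The missing idea, which is how the paper proceeds, is to work in $\mathbf{P}^1$, where points \emph{can} be moved. The automorphism $[x:y]\mapsto[x:y-x]$ makes $0$ and $\infty$ $\mathbf{P}^1$-homotopic, and $\mathbf{P}^1$-homotopic maps are homotopic in $\mathrm{MS}$ by \cite[4.4]{AHI23}. The argument then runs as follows.
\begin{enumerate}
\item Put $K=\mathbf{P}^1_S- Z$, which contains $0$ and $\infty$. Write $Z\times_SZ=\Delta(Z)\sqcup C$ and $V=\mathbf{P}^1_Z- C$. The square with $V-\Delta(Z)\to V$ over $K\hookrightarrow\mathbf{P}^1_S$ is Nisnevich, so $\mathrm{fib}(K\to\mathbf{P}^1_S)\simeq g_\#(W)$ with $W\in\mathrm{MS}^\omega_Z$. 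This is all Step~2 should deliver, and it needs no purity. Be careful with purity here: if $\mathrm{Th}_Z(N)$ means $\mathbf{P}(N\oplus\mathcal{O})/\mathbf{P}(N)$, the identification fails in $\mathrm{MS}$, since already for $0\in\mathbf{A}^1$ the natural comparison has fiber $\mathbf{A}^1$.
\item The map $\infty-0\colon\mathbb{S}\to K$ is null in $\mathbf{P}^1_S$, so it lifts to $g_\#(W)$.
\item Compose with the regular map $\varphi=1-\sfrac{q(0)}{q}\colon K\to\mathbf{A}^1$. It sends $0\mapsto 0$ and $\infty\mapsto 1$, so $\varphi\circ(\infty-0)$ is the unit.
\end{enumerate}
Your polynomial $f$ is the right object, but it enters through this function on $\mathbf{P}^1_S- Z$, not through norms or transfers. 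Here $q$ is the characteristic polynomial of the coordinate on $Z$. After splitting $S$ into finitely many clopen pieces of constant rank, over which factorizations do glue, $q$ is a global monic polynomial with $V(q)=Z$. So no Zariski-local reduction is needed. Its constant term is $\pm$ the norm of the coordinate, hence a unit. Surjectivity of $g$ is used exactly to get $\deg q\geq 1$, which forces $\varphi(\infty)=1$.
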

	\begin{proof}
		Let $K=\mathbf{P}^1_S-Z$. Since $Z\rightarrow S$ is \'etale and separated, we may decompose $Z\times_S Z=\Delta(Z)\sqcup C$. Set $V=\mathbf{P}^1_Z-C$. Then $V\to\mathbf{P}^1_S$ is \'etale, $V\times_{\mathbf{P}^1_S}Z=\Delta(Z)\rightarrow Z$ is an isomorphism, and $V\times_{\mathbf{P}^1_S}K=V-\Delta(Z)$. Thus,
		\[\begin{tikzcd}
			V-\Delta(Z)\ar[r]\ar[d] & V\ar[d]\\
			K\ar[r,hookrightarrow] & \mathbf{P}^1_S
		\end{tikzcd}\]
		is a nisnevich square. Its top edge consists of open subschemes of $\mathbf{P}^1_Z$, hence of smooth $Z$-schemes. By nisnevich excision and exactness of $g_\#$, its fiber in $\mathrm{MS}_S$ is of the form $g_\#(W)$ for some $W\in \mathrm{MS}_Z^\omega$. 

		The automorphism $[x:y]\mapsto[x:y-x]$ of $\mathbf{P}^1$ sends $0$ to $0$ and $1$ to $\infty$. Thus, $0$ and $\infty$ are $\mathbf{P}^1$-homotopic, hence homotopic as maps $\mathbb{S}\rightarrow\mathbf{P}^1$ \cite[4.4]{AHI23}. Therefore, the map $\mathbb{S}\xrightarrow{\infty-0}K$ becomes nullhomotopic in $\mathbf{P}^1$ and factors through $g_\#(W)$.

		It now suffices to construct a map $K\rightarrow \mathbf{A}^1$ realizing the claimed factorization. Without loss of generality, we may assume that $g$ has constant rank $d$. Let $q \in \mathcal{O}(S)[s]$ be the characteristic polynomial of the coordinate on $Z$; it is monic of positive degree $d$, $q(0)$ is a unit and $V(q)=Z$. Define $\varphi:K\rightarrow \mathbf{A}^1$ by $\varphi(x)=1-\sfrac{q(0)}{q(x)}$. The map $\varphi$ is regular on $K$, with $\varphi(0)=0$ and $\varphi(\infty)=1$. It follows that the composition $\varphi\circ (\infty-0)$ is the unit of $\mathbf{A}^1$. This proves the claim.  \todo{explain somewhere that $\mathbb{S}\xrightarrow{1-0}\mathbf{A}^1$ is the unit. explain that multiplication is not addition it is multiplication. Define our notation for maps of the form $\infty-0$: they are the projections of $\infty_+-0_+$.}
	\end{proof}

	\begin{remark}\label{remark:qcqs_only_gains_compactness}
		When $S$ is not qcqs, then the proof of Theorem \ref{theorem:unit_of_a1_factors_through_induced} applies verbatim; only compactness of $W$ is lost. For our use, compactness of $W$ will not be needed.
	\end{remark}

	\begin{remark}\label{remark:transport_of_vanishing}
		A consequence of Theorem \ref{theorem:unit_of_a1_factors_through_induced} is that we can transport vanishing of maps from $\mathrm{MS}_{Z}$ to $\mathrm{MS}_S$. More precisely, let $f:E\rightarrow F$ be a map in $\mathrm{MS}_S$. Then $g^*(f)\simeq 0$ implies $f\otimes \mathrm{id}_{\mathbf{A}^1}\simeq 0$. Indeed, due to the $\mathbf{E}_\infty$-structure of $\mathbf{A}^1$ and by the projection formula, the map $f\otimes \mathrm{id}_{\mathbf{A}^1}$ has the factor $f\otimes \mathrm{id}_{g_\#(W)}\simeq g_\#(g^*(f)\otimes \mathrm{id}_W)\simeq 0$.
	\end{remark}

	\begin{remark}\label{remark:explicit_version_of_unit_of_a1_factors_through_induced}
		Let $q \in \mathcal{O}(S)[s]$ be monic of positive degree such that $q(0)$ and $\mathrm{disc}(q)$ are units. Then $Z=V(q)\subset \mathbf{G}_m$ is closed finite \'etale and surjective over $S$, and Theorem \ref{theorem:unit_of_a1_factors_through_induced} applies.
	\end{remark}

	The partial blindness of $\mathbf{A}^1$ established in Theorem \ref{theorem:unit_of_a1_factors_through_induced} implies the a priori surprising statement that $\mathbf{A}^1$-invariance comes for free in $\mathrm{MS}^-_S$.

	\begin{proposition}\label{proposition:ms_s_minus_is_a1-invariant}
		We have $\mathbf{A}^{1,-}\simeq 0$. In particular, $\mathrm{MS}^-_S=\mathrm{Sp}(S)^-$.
	\end{proposition}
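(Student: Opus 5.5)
Throughout I use the standard decomposition $\mathrm{MS}_S[\sfrac{1}{2}]\simeq \mathrm{MS}^+_S\times \mathrm{MS}^-_S$ cut out by the idempotents $e^\pm=\sfrac{(1\pm\epsilon)}{2}$ on $\mathbb{S}[\sfrac12]$, where $\epsilon=-\langle -1\rangle$ and $\langle u\rangle$ is the self-map of $\mathbb{S}\simeq \Sigma^{-2,-1}\mathbf{P}^1$ induced by $[x:y]\mapsto[ux:y]$. With this convention $\langle -1\rangle$ acts by $-1$ on the minus part. The plan is to apply the transport of vanishing (Remark \ref{remark:transport_of_vanishing}) to the idempotent $f=e^-\colon \mathbb{S}[\sfrac12]\to\mathbb{S}[\sfrac12]$. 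For this we pass to a finite \'etale cover on which $-1$ becomes a square.

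\emph{Step 1: the cover.} After inverting $2$ we may work over $S[\sfrac12]$; all objects of $\mathrm{MS}^-_S$ live there. Take $q(s)=s^2+1\in\mathcal{O}(S)[s]$. It is monic of positive degree, $q(0)=1$ is a unit, and $\mathrm{disc}(q)=-4$ is a unit. By Remark \ref{remark:explicit_version_of_unit_of_a1_factors_through_induced}, $Z=V(q)\subset\mathbf{G}_m$ is finite \'etale and surjective over $S$, so Theorem \ref{theorem:unit_of_a1_factors_through_induced} applies. If $S$ is not qcqs, Remark \ref{remark:qcqs_only_gains_compactness} covers this case. On $Z$ the coordinate $i$ satisfies $i^2=-1$.

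\emph{Step 2: $g^*(e^-)\simeq 0$.} Since $g^*$ is symmetric monoidal and sends $\langle -1\rangle$ to $\langle -1\rangle$, it suffices to show $\langle -1\rangle=\langle i^2\rangle\simeq 1$ in $\pi_0\mathrm{End}(\mathbb{S}_Z)$. Then $\epsilon=-1$ on $Z$, and so $g^*(e^-)=\sfrac{(1+\epsilon)}{2}=0$.

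This is the step I expect to be the main obstacle, because I must verify, in the nonclassical setting of \cite{AHI23}, that $\langle u^2\rangle=1$. My first attempt would realize $\langle u\rangle$ on $\mathbf{P}^1$ via the linear map $\mathrm{diag}(u,1)$. Modulo scalars, $\mathrm{diag}(u^2,1)$ agrees with $\mathrm{diag}(u,u^{-1})$, and the latter is a product of elementary matrices. Each elementary matrix is $\mathbf{A}^1$-, and hence $\mathbf{P}^1$-, homotopic to the identity via $t\mapsto$ the elementary matrix with entry scaled by $t$. I would then invoke \cite[4.4]{AHI23}, in the same way the proof of Theorem \ref{theorem:unit_of_a1_factors_through_induced} used it, to conclude that these $\mathbf{P}^1$-homotopies give homotopies of maps in $\mathrm{MS}$. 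A fallback is to cite the identification $\langle ab^2\rangle=\langle a\rangle$ from \cite{AHI24} or \cite{Hoy24}.

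\emph{Step 3: conclusion.} Remark \ref{remark:transport_of_vanishing} applied to $f=e^-$ gives $e^-\otimes\mathrm{id}_{\mathbf{A}^1}\simeq 0$. But $e^-\otimes\mathrm{id}_{\mathbf{A}^1[1/2]}$ is the idempotent that projects onto $\mathbf{A}^{1,-}$. An object whose identity map is zero vanishes, so $\mathbf{A}^{1,-}\simeq 0$. For the second assertion, let $E\in\mathrm{MS}^-_S$. Then
\[
\underline{\mathrm{Hom}}(\mathbf{A}^1,E)\simeq\underline{\mathrm{Hom}}(\mathbf{A}^{1,-},E)\simeq 0,
\]
since maps from the plus summand into a minus object vanish. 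Hence every object of $\mathrm{MS}^-_S$ is $\mathbf{A}^1$-invariant, which gives $\mathrm{MS}^-_S\subset\mathrm{Sp}(S)^-$. The reverse inclusion is automatic.
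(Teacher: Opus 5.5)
Your argument is correct when $2\in\mathcal{O}(S)^\times$, and in that case it is exactly the first case of the paper's proof: the cover $V(s^2+1)$, the relation $\langle a^2\rangle=1$ from \cite[5.2]{Hoy24}, and transport of vanishing. The gap is the reduction in Step~1. Passing to $\mathrm{MS}_S[\sfrac12]$ inverts $2$ in the coefficients, not on the base. So $\mathbf{A}^{1,-}$ is a summand of $\mathbf{A}^1[\sfrac12]$ over an arbitrary $S$, including bases such as $\mathrm{Spec}\,\mathbf{Z}$ or $\mathrm{Spec}\,\mathbf{Z}_{(2)}$ that have points of residue characteristic $2$. Your assertion that objects of $\mathrm{MS}^-_S$ ``live over $S[\sfrac12]$'' is precisely what needs proof. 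In the $\mathbf{A}^1$-invariant setting you could get it from the localization sequence for $S[\sfrac12]\subset S\supset V(2)$, together with $\langle -1\rangle=\langle 1\rangle$ where $2=0$. No such gluing along a closed complement is available in $\mathrm{MS}$: you only have Zariski (Nisnevich) descent, and $S[\sfrac12]$ by itself is not a cover. Near residue characteristic $2$ your polynomial $s^2+1$ has discriminant $-4$, so it is not \'etale there, and your argument does not reach those points at all.

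The missing idea is a second finite \'etale cover that works wherever $3$ is invertible. Over $S[\sfrac13]$, take $q(x)=x^2+x+1$; then $q(0)=1$ and $\mathrm{disc}(q)=-3$ are units. On $Z=V(q)$ one has $x^3=1$, so $x=(x^2)^2$ and $x^2$ are both squares, while $-1=x+x^2$. The relation $\langle a\rangle+\langle b\rangle=\langle a+b\rangle+\langle ab(a+b)\rangle$ of \cite[5.3]{Hoy24} then gives $2=\langle x\rangle+\langle x^2\rangle=2\langle -1\rangle$ on $\mathbb{S}_Z$. By transport of vanishing, $2(\langle -1\rangle-1)$ kills $\mathbf{A}^1$. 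Hence $\langle -1\rangle=1$ on $\mathbf{A}^1[\sfrac12]$, and $\mathbf{A}^{1,-}\simeq 0$ over $S[\sfrac13]$. The opens $S[\sfrac12]$ and $S[\sfrac13]$ cover $S$, and restriction to opens commutes with forming $\mathbf{A}^{1,-}$. Zariski descent of $\mathrm{MS}$ therefore finishes the proof, and this is how the paper argues.

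There are also two smaller issues. First, your primary argument for $\langle u^2\rangle=1$ fails. An $\mathbf{A}^1$-homotopy is not a $\mathbf{P}^1$-homotopy. In fact, a $\mathbf{P}^1$-family of automorphisms of $\mathbf{P}^1$ is a map from $\mathbf{P}^1$ to the affine group scheme $\mathrm{PGL}_2$, hence constant, so elementary matrices cannot be joined to the identity this way; rely on \cite[5.2]{Hoy24}, as the paper does. Second, your idempotents are swapped. With $\epsilon=-\langle -1\rangle$, the projector onto the minus part is $\sfrac{(1+\epsilon)}{2}=\sfrac{(1-\langle -1\rangle)}{2}$. That is the formula you actually use in Step~2, not the $e^-$ given by $e^\pm=\sfrac{(1\pm\epsilon)}{2}$.
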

	\begin{proof}
		If $2\in \mathcal{O}(S)^\times$, then Remark \ref{remark:explicit_version_of_unit_of_a1_factors_through_induced} applied with $q(x)=x^2+1$ shows that $\left<-1\right>$ acts as the identity on $\mathbf{A}^1$, since $-1$ is a square on $V(x^2+1)$ and $\left<a^2\right>=1$ for every unit $a$ \cite[5.2]{Hoy24}. If $3\in \mathcal{O}(S)^\times$, then Remark \ref{remark:explicit_version_of_unit_of_a1_factors_through_induced} applied with $q(x)=x^2+x+1$ shows that $2\left<-1\right>$ acts as $2$ on $\mathbf{A}^1$. Indeed, on $V(x^2+x+1)$ we have $-1=x+x^2$ with $x=(x^2)^2$, so that $2=\left<x\right>+\left<x^2\right>=2\left<-1\right>$ by \cite[5.3]{Hoy24}. In either case, this implies $\mathbf{A}^{1,-}\simeq 0$. In general, we cover $S$ by $S[\sfrac{1}{2}]$ and $S[\sfrac{1}{3}]$. The pullback to those opens will preserve $\mathbf{A}^{1,-}$; we know $\mathbf{A}^{1,-}$ vanishes over $S[\sfrac{1}{2}]$ and $S[\sfrac{1}{3}]$ by the above cases. The claim now follows from zariski descent of $\mathrm{MS}$.
	\end{proof}

	\subsection{loyalty of the affine line}
	The following definition captures the ability of objects to inherit invertibility in the action of primes on $S$, stably under base change.

	\begin{definition}\label{definition:loyal}
		For a prime $p \in \mathbf{Z}$, let $j_p: S[\sfrac{1}{p}]\hookrightarrow S$ be the open subscheme on which $p$ is invertible. We say $A \in \mathrm{MS}_S$ is \emph{loyal} if $p$ acts invertibly on $j_p^*(A)$ for every prime $p$. We say $A$ is \emph{fully loyal} if moreover $x^*(A/p)\neq 0$ for every prime $p\in \mathbf{Z}$ and every point $x:\mathrm{Spec}(k)\rightarrow S$ of characteristic $p$.
	\end{definition}

	\begin{remark}
		Note that $A$ is loyal if and only if, for every $f: S'\rightarrow S$ and every prime $p\in \mathbf{Z}$ invertible on $S'$, $p$ acts invertibly on $f^*(A)$. Similarly, $A$ is fully loyal if and only if, for every $f: S'\rightarrow S$ and every prime $p\in \mathbf{Z}$, $p$ acts invertibly on $f^*(A)$ exactly when $p$ is invertible on $S'$.
	\end{remark}

	\begin{theorem}\label{theorem:loyalty_of_the_affine_line}
		Let $l$ be an integer that acts invertibly on $S$. Then $l$ acts invertibly on $\mathbf{A}^1$ in $\mathrm{MS}_S$.
	\end{theorem}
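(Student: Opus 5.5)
The plan is to reduce to the case of a single prime and then produce an explicit inverse of $p$ in the ring $\pi_0\mathrm{End}(\mathbf{A}^1)$, working on finite \'etale covers and bringing the answer back to $S$ with Theorem \ref{theorem:unit_of_a1_factors_through_induced}. Several steps below are guesses; the key identity in the third paragraph in particular is not established.

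\emph{Reduction to a prime.} Factor $l=\pm\prod p_i$. Since $l\in\mathcal{O}(S)^\times$, every prime factor $p$ of $l$ is also a unit on $S$, and it suffices to treat each such $p$ separately. By Remark \ref{remark:l_invertible_on_a1_implies_mod_l_is_a1-invariant}, the statement for $p$ is equivalent to $\mathbf{A}^1\otimes E/p\simeq 0$ for all $E\in\mathrm{MS}_S$. Taking $E=\mathbf{A}^1$ and using that $\mathbf{A}^1$ is an $\mathbf{E}_\infty$-algebra, this in turn reduces to showing that $p\cdot 1$ is invertible in the ring $\pi_0\mathrm{map}(\mathbb{S},\mathbf{A}^1)$. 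By Lemma \ref{lemma:a1-invariance_is_zariski_local} and Zariski descent of $\mathrm{MS}$, we may also work Zariski locally on $S$ and assume $S$ is qcqs.

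\emph{Passing to a finite \'etale cover.} Since $p$ is a unit, Remark \ref{remark:explicit_version_of_unit_of_a1_factors_through_induced} applies to $q(x)=x^p-1$: we have $q(0)=-1$ and $\mathrm{disc}(q)=\pm p^p$, both units. This gives $g\colon Z=V(q)\to S$, the scheme of $p$-th roots of unity. Remark \ref{remark:transport_of_vanishing} says that any map $f$ in $\mathrm{MS}_S$ with $g^*f\simeq 0$ satisfies $f\otimes\mathrm{id}_{\mathbf{A}^1}\simeq0$. We want to apply this to $f=\mathrm{id}_{E/p}$, which would give $\mathbf{A}^1\otimes E/p\simeq0$. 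That requires $g^*(\mathbf{A}^1\otimes E/p)\simeq 0$, so the problem is moved to $Z$. One cannot simply iterate, because $Z$ still has $p$ invertible, so the cover has to be used to actually compute something.

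\emph{The key identity, and the main obstacle.} The step I expect to be hardest is producing a concrete Grothendieck--Witt type identity on $\mathbf{A}^1$ that exhibits $p$ as a unit.
\begin{itemize}
\item First, Proposition \ref{proposition:ms_s_minus_is_a1-invariant} gives $\left<-1\right>=1$ on $\mathbf{A}^1$. Hence $p_\varepsilon=\sum_{i<p}\left<(-1)^i\right>$ acts as $p$.
\item Second, I would consider the unit $x$ on $Z\subset\mathbf{G}_m$ and use the Milnor--Witt relation $p_\varepsilon[x]=[x^p]=[1]=0$ together with $\left<x^p\right>=1$.
\item Third, I would bring this back to $S$ through the factorization $\mathbb{S}\to g_\#W\to\mathbf{A}^1$ of the unit, which comes from the explicit map $\varphi=1-q(0)/q$.
\item Finally, I would compare $g_\#g^*$ with the transfer, whose composite with the unit is the trace form of $\mathcal{O}[x]/(x^p-1)$. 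That trace form is $\left<p\right>$ plus a hyperbolic part, and the hyperbolic part acts as an integer multiple of $2$ on $\mathbf{A}^1$.
\end{itemize}
Combining these, I hope to obtain an equation $p\cdot u=\left<\text{unit}\right>$ in $\pi_0\mathrm{End}(\mathbf{A}^1)$ for some $u$. Since $\left<\text{unit}\right>$ is invertible, this would show that $p$ is invertible.

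\emph{Fallback.} If the trace-form bookkeeping fails, I would instead apply Theorem \ref{theorem:unit_of_a1_factors_through_induced} to the Kummer-type cover $V(x^p-a)$ for suitable units $a$. I would then try to show directly that multiplication by $p$ on $g^*(\mathbf{A}^1)$ is split by the $p$-th power map. That map is finite \'etale of degree $p$ on $\mathbf{G}_m$ because $p$ is a unit.
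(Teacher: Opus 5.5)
Your proposal leaves its only substantive step open, and the route you sketch for that step cannot work as set up.

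\emph{The cover carries no information.} Since $\Phi_p(1)=p$ is a unit, $x^p-1=(x-1)\Phi_p(x)$ with comaximal factors. So $Z=V(x^p-1)\simeq S\sqcup V(\Phi_p)$, and $g$ has a section $s$ given by the component $x=1$. Consequently:
\begin{itemize}
\item $g^*F\simeq 0$ already gives $F\simeq s^*g^*F\simeq 0$;
\item the unit of $\mathbf{A}^1$ factors through $g_\#(\mathrm{MS}_Z^\omega)$ for trivial reasons, since $\mathbb{S}\simeq g_\#s_\#\mathbb{S}$;
\item Theorem~\ref{theorem:unit_of_a1_factors_through_induced} and Remark~\ref{remark:transport_of_vanishing} therefore tell you nothing new.
\end{itemize}

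\emph{The remaining ingredients cannot suffice.} What is left is $\left<-1\right>=1$, the relation $p_\varepsilon[\zeta]=[\zeta^p]=0$ for a $p$-th root of unity $\zeta$, and a trace form $\left<p\right>+\tfrac{p-1}{2}h$ (whose identification with a transfer in $\mathrm{MS}_S$ you would also have to prove). These identities hold equally for the sphere in $\mathrm{SH}(\mathbf{C})$, where $\pi_0\mathrm{End}(\mathbb{S})=\mathbf{Z}$ and $p$ is not invertible. So no bookkeeping with them can produce $p\cdot u=\left<\text{unit}\right>$.

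More fundamentally, knowing that some symbol is killed by $p_\varepsilon$ tells you nothing about $p$. You need a mechanism that converts vanishing of a symbol into vanishing of an object, and that mechanism is what is missing.

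A smaller point: the proof of Proposition~\ref{proposition:ms_s_minus_is_a1-invariant} only gives $2\left<-1\right>=2$ on $\mathbf{A}^1$ where just $3$ is invertible. The paper upgrades this to $\left<-1\right>=1$ by working on $\mathbf{A}^1/l$, where $2$ acts invertibly when $l$ is odd.

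\emph{The paper's mechanism.} The paper's proof supplies exactly the missing step:
\begin{enumerate}
\item Set $M=\mathbf{A}^1/l$, so $l^2$ kills $\pi_0\mathrm{End}(M)$ by Lemma~\ref{lemma:l-squared_vanishes_on_E_mod_l_}.
\item Pass to $\mathbf{G}_{m,S}$ with its universal unit $x$, and use the Kummer cover $Z=V(s^{l^2}-x)$. It is finite \'etale because $l$ is a unit, and unlike $\mu_p$ it does not split off a copy of the base.
\item On $Z$ one has $[x]=[y^{l^2}]=\sum_i\left<y\right>^i[y]$. Once $\left<y\right>$ is shown to act as $1$ on $M_Z$, this equals $l^2[y]$, which vanishes on $M_Z$.
\item Transport of vanishing, together with the retraction of $M_{\mathbf{G}_m}$ off $M_{\mathbf{G}_m}\otimes\mathbf{A}^1_{\mathbf{G}_m}$, gives $[x]\otimes\mathrm{id}_{M_{\mathbf{G}_m}}\simeq 0$.
\item By the adjunction $p_\#\dashv p^*$, this becomes $[x]^\natural\otimes\mathrm{id}_M\simeq 0$ over $S$.
\item Since $[x]^\natural\colon{\mathbf{G}_m}_+\to\Sigma^{-1}\mathbf{P}^1$ has a section, $\Sigma^{-1}\mathbf{P}^1\otimes M\simeq 0$, hence $M\simeq 0$ by invertibility of $\mathbf{P}^1$.
\end{enumerate}

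Your fallback points toward Kummer covers, but it differs from this in three ways:
\begin{itemize}
\item it uses ``suitable units $a$'' instead of the universal one;
\item it uses exponent $p$ rather than $p^2$, and only $p^2$, not $p$, is guaranteed to act as zero on $M_Z$;
\item it aims to split multiplication by $p$, rather than to kill $[x]$ and exploit the splitting of ${\mathbf{G}_m}_+$.
\end{itemize}
So it still lacks the decisive step.
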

	\begin{proof} 
		Let $p:\mathbf{G}_m\rightarrow S$ be the projection with coordinate $x$, and set $M=\mathbf{A}^1/l$, $M_{\mathbf{G}_m}=p^*(M)=\mathbf{A}^1_{\mathbf{G}_m}/l$. Denote $[x]:\mathbb{S}\xrightarrow{x}{\mathbf{G}_m}_+\xrightarrow{r}\Sigma^{-1}\mathbf{P}^1$ and $[x]^\natural:{\mathbf{G}_m}_+\xrightarrow{r}\Sigma^{-1}\mathbf{P}^1$. Moreover, define $q(s)=s^{l^2}-x\in \mathcal{O}(\mathbf{G}_m)[s]$, $Z=V(q)$, $g:Z\subset\mathbf{A}^1_{\mathbf{G}_m}\rightarrow \mathbf{G}_m$ with coordinate $y$, and $M_Z=g^*(M_{\mathbf{G}_m})$. We divide the proof into the following steps.
		\begin{enumerate}
			\item \emph{$\left<-1\right>$ acts as the identity on $M_{\mathbf{G}_m}$}: by Proposition \ref{proposition:ms_s_minus_is_a1-invariant} for $l$ odd, and by its proof for $l$ even.
			\item \emph{$[x]\otimes \mathrm{id}_{M_{\mathbf{G}_m}}\simeq 0$}: use Remark \ref{remark:explicit_version_of_unit_of_a1_factors_through_induced} over the base $\mathbf{G}_m$; here $q(0)=-x$ and $\mathrm{disc}(q)=\pm l^{2l^2}x^{l^2-1}$ are units because $l$ is invertible. On $Z$ we have $y^{l^2}=x$. The relations $[ab]=[a]+\left<a\right>[b]$ for units $a,b$ \cite[3.2 and 3.4]{Hoy24} give $[y^{l^2}]=\sum_{i=0}^{l^2-1}\left<y\right>^i[y]$. We claim that $\left<y\right>$ acts as the identity on $M_Z$. If $l$ is odd, then $\left<y\right>+\left<-1\right>\left<y\right>=1+\left<-1\right>$ \cite[3.5]{Hoy24}, so $2(\left<y\right>-1)$ acts as zero on $M_Z$ by $(1)$, and $2$ acts invertibly on $M_Z$ by Lemma \ref{lemma:l-squared_vanishes_on_E_mod_l_}. If $l$ is even, then $2 \in \mathcal{O}(S)^\times$, and Remark \ref{remark:explicit_version_of_unit_of_a1_factors_through_induced} over the base $Z$, applied with $q(t)=t^2-y$, together with $\left<t^2\right>=1$ \cite[5.2]{Hoy24} and Remark \ref{remark:transport_of_vanishing}, shows that $\left<y\right>-1$ acts as zero on $\mathbf{A}^1_Z$, hence on $M_Z$. Therefore $g^* ([x]\otimes \mathrm{id}_{M_{\mathbf{G}_m}})=[y^{l^2}]\otimes \mathrm{id}_{M_Z}=[y]\otimes l^2\hspace{0.5mm}\mathrm{id}_{M_Z}\simeq 0$ by Lemma \ref{lemma:l-squared_vanishes_on_E_mod_l_}. Thus, $[x]\otimes \mathrm{id}_{M_{\mathbf{G}_m}}\otimes \mathrm{id}_{\mathbf{A}^1_{\mathbf{G}_m}}\simeq 0$ by Theorem \ref{theorem:unit_of_a1_factors_through_induced}. Finally, $M_{\mathbf{G}_m}$ is a retract of $M_{\mathbf{G}_m}\otimes \mathbf{A}^1_{\mathbf{G}_m}$ through its module structure.
			\item \emph{$[x]^\natural\otimes \mathrm{id}_{M}\simeq 0$}:  $[x]\otimes \mathrm{id}_{M_{\mathbf{G}_m}}$ corresponds to $[x]^\natural\otimes \mathrm{id}_{M}$ under the adjunction $p_\#\dashv p^*$ and $p_\#p^*(M)\simeq {\mathbf{G}_m}_+\otimes M$. Conclude with $(2)$.
			\item \emph{$M\simeq 0$}: the map $[x]^\natural\otimes \mathrm{id}_{M}$ has a section by \cite[2.2]{Hoy24}. Hence, $\mathrm{id}_{\Sigma^{-1}\mathbf{P}^1\otimes M}\simeq 0$. The claim follows by invertibility of $\mathbf{P}^1$.
		\end{enumerate} 
		\vspace{-4mm}
	\end{proof}

	\begin{remark}\label{remark:a1_sees_non-invertibility}
		Let $l$ be a prime that acts noninvertibly on $S$. Then $l$ does not act invertibly on $\mathbf{A}^1$ in $\mathrm{MS}_S$. Base change to residue fields is symmetric monoidal, exact, and preserves the affine line. Thus, we may assume that $S$ is the spectrum of a characteristic $l$ field. The Hochschild homology spectrum $\mathrm{HH}$ \cite[6.4]{AHI24} now satisfies $\mathrm{HH}_0 (S)=\mathcal{O}(S)$. Thus, $l$ acts as zero on $\mathrm{HH}$. If $l$ acted invertibly on $\mathbf{A}^1$ in $\mathrm{MS}_S$, then $\mathrm{HH}/l\simeq \mathrm{HH}\oplus \Sigma \mathrm{HH}$ would be $\mathbf{A}^1$-invariant by Remark \ref{remark:l_invertible_on_a1_implies_mod_l_is_a1-invariant}. Since $\mathrm{HH}$ is clearly not $\mathbf{A}^1$-invariant, $l$ cannot act invertibly on $\mathbf{A}^1$. \todo{show that $l$ does or does not necessarily act as zero on $\mathbf{A}^1$.}
	\end{remark}

	\begin{remark}
		As a consequence of Remark \ref{remark:a1_sees_non-invertibility}, the statement of Theorem \ref{theorem:loyalty_of_the_affine_line} can be upgraded to an if and only if statement. In other words, the affine line $\mathbf{A}^1\in \mathrm{MS}_S$ is fully loyal. 
	\end{remark}

\section{one consequence}
	In this section we establish one of the many applications of loyalty of the affine line in motivic homotopy theory. Further applications, such as \'etale descent for the kernel of $\mathbf{A}^1$-localization, as well as implications for t-structures, effectivity and slices in $\mathrm{MS}_S$, will be treated elsewhere.
	\subsection{\texorpdfstring{$\mathbf{A}^1$}{A1}-invariance of the sphere}

	\begin{theorem}\label{theorem:a1-invariance_of_the_motivic_sphere_spectrum}
		Let $S$ be a regular locally noetherian scheme and let $P\subset \mathbf{Z}$ be the set of primes acting noninvertibly on $S$. Then, the $P$-inverted motivic sphere spectrum $\mathbb{S}[P^{-1}]\in \mathrm{MS}_S$ is $\mathbf{A}^1$-invariant.
	\end{theorem}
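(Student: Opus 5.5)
We reduce the statement to the rational part and the mod-$p$ parts using Proposition \ref{proposition:a1-invariance_is_a1-invariance_rationally_and_mod_primes}. Set $E=\mathbb{S}[P^{-1}]$. For a prime $p\in P$, $p$ acts invertibly on $E$, so $E/p\simeq 0$ is trivially $\mathbf{A}^1$-invariant. For a prime $p\notin P$, $p$ acts invertibly on $S$. Theorem \ref{theorem:loyalty_of_the_affine_line} then says $p$ acts invertibly on $\mathbf{A}^1$ in $\mathrm{MS}_S$, and Remark \ref{remark:l_invertible_on_a1_implies_mod_l_is_a1-invariant} gives that $E/p$ is $\mathbf{A}^1$-invariant. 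So all mod-$p$ parts are handled formally, and it remains to show that $E_\mathbf{Q}=\mathbb{S}_\mathbf{Q}$ is $\mathbf{A}^1$-invariant.

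For the rational part we first use Proposition \ref{proposition:ms_s_minus_is_a1-invariant}. Rationally, $\mathbb{S}_\mathbf{Q}$ splits into its $+$ and $-$ parts (via the idempotents $(1\pm\langle -1\rangle)/2$ after inverting $2$), and the minus part lies in $\mathrm{MS}^-_S=\mathrm{Sp}(S)^-$, so it is $\mathbf{A}^1$-invariant. What remains is $\mathbb{S}_\mathbf{Q}^+$. Here we want to identify it with a rational motivic cohomology spectrum. The natural candidate is the rational $K$-theory spectrum, or its Adams weight zero summand $\mathrm{H}\mathbf{Q}$: the $\mathbf{P}^1$-periodized algebraic $K$-theory $\mathrm{KGL}\in\mathrm{MS}_S$ of \cite{AHI24}, whose rationalization splits into Adams eigenspaces. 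We would then use regularity. Over a regular locally noetherian scheme, $K$-theory agrees with $G$-theory/homotopy $K$-theory and is $\mathbf{A}^1$-invariant, so $\mathrm{KGL}$, and hence each Adams summand, lies in $\mathrm{Sp}(S)$.

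The key step is to show $\mathbb{S}^+_\mathbf{Q}\simeq \mathrm{KGL}_\mathbf{Q}^{(0)}$ in $\mathrm{MS}_S$, not only after $\mathbf{A}^1$-localization. We would try to deduce this from the $\mathbf{A}^1$-invariant statement (Morel, Cisinski–Déglise: $\mathrm{L}_{\mathbf{A}^1}\mathbb{S}^+_\mathbf{Q}\simeq \mathrm{H}_{\mathrm{B}}$ over regular bases) by showing that the fiber $F$ of $\mathbb{S}_\mathbf{Q}^+\to \mathrm{L}_{\mathbf{A}^1}\mathbb{S}^+_\mathbf{Q}$ vanishes. To do this, we would reduce by Lemma \ref{lemma:a1-invariance_is_zariski_local} and continuity to local, then to residue fields and henselian local rings. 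We would exploit that $\mathbf{A}^1$-noninvariance in $\mathrm{MS}$ is detected by objects like $\mathrm{HH}$, which are $p$-torsion phenomena in characteristic $p$ and vanish rationally in characteristic $0$. The honest expectation is that this comparison is the main obstacle, since it requires knowing that non-$\mathbf{A}^1$-invariant rational phenomena (e.g.\ $\mathrm{HC}_\mathbf{Q}$-type contributions in characteristic zero) do not appear in the unit. If that fails, an alternative is to rewrite $\mathbb{S}_\mathbf{Q}$ as a colimit of $\mathbb{S}/n!$-desuspensions using the cofiber sequence in the proof of Proposition \ref{proposition:a1-invariance_is_a1-invariance_rationally_and_mod_primes}. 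This is only useful if one can control $\mathbb{S}$ itself, so the first approach, via the regularity input to identify $\mathbb{S}^+_\mathbf{Q}$ with an $\mathbf{A}^1$-invariant $K$-theoretic summand, is what we would try first.
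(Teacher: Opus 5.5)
Your reduction via Proposition \ref{proposition:a1-invariance_is_a1-invariance_rationally_and_mod_primes} matches the paper, and so do three further steps. The mod-$p$ step is trivial for $p\in P$ and follows from Theorem \ref{theorem:loyalty_of_the_affine_line} plus Remark \ref{remark:l_invertible_on_a1_implies_mod_l_is_a1-invariant} for $p\notin P$. The treatment of $\mathbb{S}^-_\mathbf{Q}$ via Proposition \ref{proposition:ms_s_minus_is_a1-invariant} is also the paper's. Finally, the paper likewise finishes with $\mathrm{H}\mathbf{Q}$, taking its $\mathbf{A}^1$-invariance over a regular base from \cite[9.12]{AHI24} and Lemma \ref{lemma:a1-invariance_is_zariski_local}.

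The genuine gap is the step you flag yourself: identifying $\mathbb{S}^+_\mathbf{Q}$ with $\mathrm{H}\mathbf{Q}$ in $\mathrm{MS}_S$. Your proposed route to it is circular. The fiber $F$ of $\mathbb{S}^+_\mathbf{Q}\to\mathrm{L}_{\mathbf{A}^1}\mathbb{S}^+_\mathbf{Q}$ vanishes exactly when $\mathbb{S}^+_\mathbf{Q}$ is $\mathbf{A}^1$-invariant, which is what you are trying to prove. The computation $\mathrm{L}_{\mathbf{A}^1}\mathbb{S}^+_\mathbf{Q}\simeq\mathrm{H}_{\mathrm{B}}$ (Morel, Cisinski--D\'eglise) tells you nothing about $F$, which is killed by $\mathrm{L}_{\mathbf{A}^1}$ by construction. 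The heuristics you offer for killing $F$ also fail:
\begin{enumerate}
\item For $S$ over $\mathbf{Q}$, $\mathrm{HH}$ is a rational motivic spectrum that is not $\mathbf{A}^1$-invariant. So non-$\mathbf{A}^1$-invariant phenomena do not disappear rationally in characteristic zero.
\item Reducing the vanishing of $F$ to residue fields would need pullbacks to points to detect vanishing on $\mathrm{MS}_S$. You give no argument for this, and it is not available without a localization theorem.
\end{enumerate}

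What is missing is a structural statement about the rational unit in $\mathrm{MS}_S$ itself, not one deduced from the $\mathbf{A}^1$-local world. The paper supplies it by citing \cite[6.3]{Hoy24}: $\mathbb{S}_\mathbf{Q}\simeq\mathrm{H}\mathbf{Q}\oplus\mathbb{S}^-_\mathbf{Q}$ already in $\mathrm{MS}_S$, so the plus part is $\mathrm{H}\mathbf{Q}$ before any $\mathbf{A}^1$-localization. In the paper, regularity is used only for the $\mathbf{A}^1$-invariance of $\mathrm{H}\mathbf{Q}$, not for this splitting. With that input your outline closes at once. Without it, or some other argument that the unit carries no non-$\mathbf{A}^1$-invariant rational information, the rational plus part remains unproved. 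As you note, the fallback via $\mathrm{colim}_n\Sigma^{-1}\mathbb{S}/n!$ only sends you back to controlling $\mathbb{S}$ itself.
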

	\begin{proof}
		By Proposition \ref{proposition:a1-invariance_is_a1-invariance_rationally_and_mod_primes} it suffices to show that $\mathbb{S}[P^{-1}]$ is $\mathbf{A}^1$-invariant rationally and mod $p$, for all primes $p \in \mathbf{Z}$. Rationally, we have $\mathbb{S}_\mathbf{Q}\simeq \mathrm{H}\mathbf{Q}\oplus \mathbb{S}^-_\mathbf{Q}$, by \cite[6.3]{Hoy24}. By the assumptions on our base and Lemma \ref{lemma:a1-invariance_is_zariski_local}, $\mathrm{H}\mathbf{Q}$ is $\mathbf{A}^1$-invariant by \cite[9.12]{AHI24}; so is $\mathbb{S}^-_\mathbf{Q}$ by Proposition \ref{proposition:ms_s_minus_is_a1-invariant}. Loyalty of the affine line together with Remark \ref{remark:l_invertible_on_a1_implies_mod_l_is_a1-invariant}, implies that $\mathbb{S}[P^{-1}]/p$ is $\mathbf{A}^1$-invariant. The claim follows.
	\end{proof}

	\begin{remark}\label{remark:a1-invariance_of_the_sphere_over_q}
		The set $P$ is empty for schemes over the rational numbers $\mathbf{Q}$. In the case of regular noetherian schemes over $\mathbf{Q}$, this answers affirmatively the question, asked in \cite[1.6]{AHI24}, whether the motivic sphere over regular noetherian schemes is $\mathbf{A}^1$-invariant.
	\end{remark}

	\begin{remark}\label{remark:a1-invariance_of_the_sphere_in_pos_char}
		The generality of Theorem \ref{theorem:a1-invariance_of_the_motivic_sphere_spectrum} can be improved in various directions. For example, for $S$ a derived scheme, we have: if there exists a nonzero integer that acts as zero on $S$, then $\mathbb{S}[P^{-1}]\in \mathrm{MS}_S$ is $\mathbf{A}^1$-invariant. In particular, $S$ does not need to be regular in positive characteristic. Loyalty of the affine line reduces this claim to $\mathbf{A}^1$-invariance of $\mathrm{H}\mathbf{Q}$ and thus $\mathrm{KGL}_\mathbf{Q}$, which is $\mathbf{A}^1$-invariant by results of Weibel \cite{Wei81} and Goodwillie \cite{Goo86}.
	\end{remark}
	
	\printbibliography
\end{document}